\numberwithin{equation}{section}
\newtheorem{thm}{Theorem}[section]
\newtheorem{lem}{Lemma}[section]
\newtheorem{rmk}{Remark}[section]
\newcommand{\N}{\mathbb{N}}
\title{The Product $e \pi$ Is Irrational}
\date{}
\author{N. A. Carella}
\begin{document}
	%\doublespacing
	\thispagestyle{empty}
	\date{}

	\maketitle
	\textbf{\textit{Abstract}:} This note shows that the product $e \pi$ of the natural base $e$ and the circle number $\pi$ is an irrational number. \let\thefootnote\relax\footnote{ \today \date{} \\
		\textit{AMS MSC}:Primary 11J72;  Secondary 11A55. \\
		\textit{Keywords}: Irrational number; Natural base $e$; Circle number $\pi$.}
	
	\tableofcontents

	%ssssssssssssssssssssssssssssssssssssssssssssssssssssssssssssssssssssssssssssssssssssssssssssssssssssssssssssssssssssssssssssssssssssssssssssssssssssssssssssssssssssssssssssssssssss
	%ssssssssssssssssssssssssssssssssssssssssssssssssssssssssssssssssssssssssssssssssssssssssssssssssssssssssssssssssssssssssssssssssssssssssssssssssssssssssssssssssssssssssssssssssssss
	%ssssssssssssssssssssssssssssssssssssssssssssssssssssssssssssssssssssssssssssssssssssssssssssssssssssssssssssssssssssssssssssssssssssssssssssssssssssssssssssssssssssssssssssssssssss
	
	\section{Introduction} \label{S001}
	The number $e= 2.718281828459 \ldots$ was proved to be irrational by Euler, circa 1744. The proof uses a differential equation to show that the continued fraction 
	\begin{equation}e=[2;2,1,2,1,1,4,1,1,6,1,1,8, \ldots]
	\end{equation} 
	is infinite, see \cite{EL1744},  \cite[Theorem 3.10]{SJ2005}. Later, a simpler proof based on the infinite series 
	\begin{equation}
		e=\sum_{n \geq 0}\frac{1}{n!}=1+\frac{1}{2!}+\frac{1}{3!}+\frac{1}{4!}+\cdots
	\end{equation}
	was found by Fourier in 1815. Many versions of the Fourier classical proof are known for $e^r$, where $r \in \mathbb{Q}$ is a rational number, and other numbers, see \cite{PL1953}, \cite[p.\ 35]{AZ2014}, \cite{SJ2005}. The number $\pi=3.141592653589 \ldots$ was proved to be irrational by Lambert, circa 1760, see \cite[p.\ 129]{BB2004}. The proof uses the continued fraction of the tangent function $\tan(x)$, the fact that the numbers $\tan(r)$ are irrationals for any nonzero rational number $r \in \mathbb{Q}$, and the value $\arctan(1)=\pi/4$ to indirectly show that the continued fraction 
	\begin{equation}
		\pi=[3;7,15,1,292,1,1,1,2,1,3,1,14, \ldots]
	\end{equation}  
	is infinite, see \cite{BB2004}, \cite{LM1997}, \cite{NI1947}. Later, simpler versions and new proofs were found by several authors, \cite{NI1947}, \cite[p.\ 35]{AZ2014}, \cite{SJ2005}. The above short compendium is a glimpse at the vast mathematical literature devoted to the analysis of the numbers $e$ and $\pi$.\\
	
	The arithmetic natures of the product $e \cdot \pi=8.539734222673 \ldots$, and  of the the sum $e+\pi$ are not known. In this note the known information on the continued fractions and the convergents of the two irrational numbers $e$ and $\pi$ are used here to construct an infinite subsequence of rational approximations for the product $e \pi$.
	
	\begin{thm} \label{thm1}
		The product $e \pi$ is an irrational number.
	\end{thm}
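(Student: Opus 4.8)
The plan is to use the classical irrationality criterion: if $\alpha = a/b$ were rational, then every rational $P/Q \neq a/b$ would satisfy $|\alpha - P/Q| \geq 1/(bQ)$, so exhibiting a sequence of rational approximations $P_k/Q_k$ with $Q_k \to \infty$ and $0 < |\alpha - P_k/Q_k| = o(1/Q_k)$ forces $\alpha \notin \Q$. As the introduction signals, the strategy is to manufacture such a sequence for $e\pi$ from the continued-fraction convergents of $e$ and of $\pi$ taken separately, rather than from any direct continued fraction of the product (which is not known).

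Concretely, I would let $p_n/q_n$ be the $n$-th convergent of $e$ and $a_m/b_m$ the $m$-th convergent of $\pi$, so that $|e - p_n/q_n| < 1/(q_n q_{n+1})$ and $|\pi - a_m/b_m| < 1/(b_m b_{m+1})$ by the standard theory. Forming $R_{n,m} = (p_n a_m)/(q_n b_m)$, one expands
\[
e\pi - R_{n,m} = \left(e - \frac{p_n}{q_n}\right)\pi + \frac{p_n}{q_n}\left(\pi - \frac{a_m}{b_m}\right),
\]
and bounds the right-hand side by roughly $\pi/(q_n q_{n+1}) + (e + o(1))/(b_m b_{m+1})$. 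The next step is to choose, for each $k$, indices $n = n(k)$ and $m = m(k)$ so that with denominator $Q_k = q_n b_m$ and error $E_k = |e\pi - R_{n,m}|$ one gets $Q_k E_k \to 0$; writing this out, one needs
\[
\frac{\pi\, b_m}{q_{n+1}} + \frac{e\, q_n}{b_{m+1}} \longrightarrow 0 .
\]
One must also check that reducing $R_{n,m}$ to lowest terms does not shrink the denominator — i.e. that $\gcd(p_n a_m, q_n b_m)$ stays controlled — and that $R_{n,m} \neq e\pi$, which is automatic since each factor is a strict approximation.

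The main obstacle is exactly the simultaneous smallness condition $\pi b_m/q_{n+1} + e q_n/b_{m+1} \to 0$: it demands that infinitely often the next partial-quotient jump for $e$ outruns the current denominator of $\pi$ \emph{and} vice versa, yet the growth rates of $(q_n)$ and $(b_m)$ are fixed in advance and essentially unrelated, so matching both inequalities along one common subsequence is the whole difficulty. I would expect the argument to lean on the highly structured continued fraction $e = [2;1,2,1,1,4,1,1,6,\dots]$, whose ratios $q_{n+1}/q_n$ spike along the subsequence of large partial quotients, played against a known effective irrationality measure for $\pi$. Whether this bookkeeping actually yields a genuine $o(1/Q)$ gain rather than merely $O(1/Q)$ is the delicate point — and, given that the irrationality of $e\pi$ is a well-known open problem, it is precisely where I would examine the author's estimates most skeptically.
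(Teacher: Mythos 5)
Your proposal follows exactly the paper's strategy: approximate $e\pi$ by the product of a convergent of $e$ and a convergent of $\pi$, split the error as $\bigl(e-\tfrac{p_n}{q_n}\bigr)\pi+\tfrac{p_n}{q_n}\bigl(\pi-\tfrac{a_m}{b_m}\bigr)$, and try to beat the rational-number lower bound $1/(sQ)$ (the paper's Lemmas \ref{lem1}, \ref{lem3}, \ref{lem4} are precisely your three ingredients; your worry about reducing $R_{n,m}$ to lowest terms is immaterial, since $|r/s-P/Q|\ge 1/(sQ)$ needs no coprimality). But your proposal stops at, and correctly flags, the step on which everything hinges: producing infinitely many pairs $(n,m)$ with $\pi b_m/q_{n+1}+e\,q_n/b_{m+1}\to 0$ simultaneously. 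That requires the denominators $b_m$ of $\pi$ to land, for infinitely many $k$, inside a window of bounded multiplicative width around $(2k)^{1-\varepsilon}q_{3k-2}$ (this is the paper's Lemma \ref{lem6}). No such statement is provable with current knowledge: the $b_m$ grow at least geometrically, the ratios $b_{m+1}/b_m$ are essentially the partial quotients of $\pi$, about whose size nothing is known (they are not known to be bounded, to be $o(m)$, or to satisfy any of the case hypotheses in Lemma \ref{lem6}), so every one of those windows could be skipped. The paper's own argument for Lemma \ref{lem6} simply assumes a growth law $b_{m_k}\asymp m_k^{1-\delta}$ "to simplify the notation" and appeals to two numerical examples for "the existence of a single value $v_{m_k}$"; that is an assumption of the conclusion, not a proof.

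So your skepticism is placed at exactly the right spot: the bookkeeping does not yield a genuine $o(1/Q)$ gain, neither in your sketch nor in the paper, and the irrationality of $e\pi$ remains an open problem. If you want to see why the gap is structural rather than technical, note that the identical template with $\sqrt{2}$ and $\sqrt{2}$ (or any pair of conjugate quadratic irrationals with rational product) produces the same inequalities up to the correlation step; only the unproved denominator-matching claim distinguishes the true conclusion from a false one, which is a strong sign that this is where all the difficulty lives.
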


	The earlier sections cover the basic required background, and the proof of Theorem \ref{thm1} appears in Section \ref{S004}. An algorithm that illustrates the effectiveness of this result appears in Section \ref{S005}.

%222222222222222222222222222222222
\section{Foundation}
Except for Theorem \ref{thm5}, all the materials covered in this section are standard results in the literature, see \cite{HW2008}, \cite{LS1995}, \cite{NZ1991}, \cite{RH1994}, \cite{SJ2005}, et alii. \\

A real number \(\alpha \in \mathbb{R}\) is called \textit{rational} if \(\alpha = a/b\), where \(a, b \in \mathbb{Z}\) are integers. Otherwise, the number
is \textit{irrational}. The irrational numbers are further classified as \textit{algebraic} if \(\alpha\) is the root of an irreducible polynomial \(f(x) \in
\mathbb{Z}[x]\) of degree \(\deg (f)>1\), otherwise it is \textit{transcendental}.\\

\begin{thm} \label{thm2} If a real number \(\alpha \in \mathbb{R}\) is a rational number, then there exists a constant \(c = c(\alpha )\) such that
	\begin{equation}
		\frac{c}{q}\leq \left|  \alpha -\frac{p}{q} \right|
	\end{equation}
	holds for any rational fraction \(p/q \neq \alpha\). Specifically, \(c \geq  1/b\text{ if }\alpha = a/b\).
\end{thm}

This is a statement about the lack of effective or good approximations for any arbitrary rational number \(\alpha \in \mathbb{Q}\) by other rational numbers. On the other hand, irrational numbers \(\alpha \in \mathbb{R}-\mathbb{Q}\) have effective approximations by rational numbers. If the complementary inequality \(\left|  \alpha -p/q \right| <c/q\) holds for infinitely many rational approximations \(p/q\), then it already shows that the real number \(\alpha \in \mathbb{R}\) is irrational, so it is sufficient to prove the irrationality of real numbers.

\begin{thm}[Dirichlet]\label{thm3} 
	Suppose $\alpha \in \mathbb{R}$ is an irrational number. Then there exists an infinite
	sequence of rational numbers $p_n/q_n$ satisfying
	\begin{equation}
		0 < \left|  \alpha -\frac{p_n}{q_n} \right|< \frac{1}{q_n^2}
	\end{equation}
	for all integers \(n\in \mathbb{N}\).
\end{thm}

\begin{thm} \label{thm4}   Let $\alpha=[a_0,a_1,a_2, \ldots]$ be the continued fraction of a real number, and let $\{p_n/q_n: n \geq 1\}$ be the sequence of convergents. Then
	\begin{equation}
		0 < \left|  \alpha -\frac{p_n}{q_n} \right|< \frac{1}{a_{n+1}q_n^2}
	\end{equation}
	for all integers \(n\in \mathbb{N}\).
\end{thm}
This is standard in the literature, the proof appears in \cite[Theorem 171]{HW2008}, \cite[Corollary 3.7]{SJ2005}, and similar references.\\

A basic extension of the previous inequalities in Theorem \ref{thm3} and Theorem \ref{thm4} provided here uses a pair of distinct irrational numbers and the corresponding parameters.

\begin{thm} \label{thm5} 
	Let $\alpha=[a_0,a_1,a_2, \ldots ]$ and  $\beta=[b_0,b_1,b_2, \ldots ]$ be distinct continued fractions for two distinct irrational numbers $\alpha \text{ and } \beta \in \mathbb{R}$ such that 
	$\alpha \beta \ne \pm 1$, respectively. Then 
	\begin{equation} \label{2036}
		0<\left|  \alpha \beta-\frac{p_nu_m}{q_nv_m} \right| < \frac{2\beta}{a_{n+1}q_n^2}+\frac{2\alpha}{b_{m+1}v_m^2} ,
	\end{equation}
	where $\{p_n/q_n:n\geq 1\}$ and $\{u_m/v_m:m\geq 1\}$ are the sequences of convergents respectively.
\end{thm}

\begin{proof} By Theorem \ref{thm4}, there exists a sequence of convergents $\{p_n/q_n:n\in \mathbb{N}\}$ such that
	\begin{equation}\label{2008}
		\left|  \alpha-\frac{p_n}{q_n} \right| <\frac{1}{a_{n+1}q_n^2}, 
	\end{equation}
	and the corresponding long form is
	\begin{equation}\label{2009}
		\frac{p_n}{q_n} -\frac{1}{a_{n+1}q_n^2} <\alpha < \frac{p_n}{q_n}+\frac{1}{a_{n+1}q_n^2}. 
	\end{equation}

	Similarly, there exists a sequence of convergents $\{u_m/v_m:m\in \mathbb{N}\}$ such that
	\begin{equation}\label{2028}
		\left|  \beta-\frac{u_m}{v_m} \right| <\frac{1}{b_{m+1} v_m^2},
	\end{equation} 
	and the corresponding long form is
	\begin{equation}\label{2029}
		\frac{u_m}{v_m} -\frac{1}{b_{m+1} v_m^2}  <\beta<\frac{u_m}{v_m} +\frac{1}{b_{m+1} v_m^2}.
	\end{equation}

	The product of the last two long forms returns
	
	\begin{equation}\label{2017}
		\left (\frac{p_n}{q_n}-\frac{1}{a_{n+1}q_n^2}\right )\left (\frac{u_m}{v_m}-\frac{1}{b_{m+1}v_m^2}\right ) <\alpha \beta <\left (\frac{p_n}{q_n}+\frac{1}{a_{n+1}q_n^2}\right )\left (\frac{u_m}{v_m}+\frac{1}{b_{m+1}v_m^2}\right ) .
	\end{equation}
	
	Expanding these expressions produces
	\begin{eqnarray}\label{2018} 
		& & \frac{p_nu_m}{q_nv_m} -\frac{1}{a_{n+1}q_n^2}\frac{u_m}{v_m}-\frac{1}{b_{m+1}v_m^2}\frac{p_n}{q_n} +\frac{1}{a_{n+1}b_{m+1}q_n^2 v_m^2} \nonumber \\
		& <&\alpha \beta \\ &<&
		\frac{p_nu_m}{q_nv_m} +\frac{1}{a_{n+1}q_n^2}\frac{u_m}{v_m}+\frac{1}{b_{m+1}v_m^2}\frac{p_n}{q_n} +\frac{1}{a_{n+1}b_{m+1}q_n^2 v_m^2} . \nonumber
	\end{eqnarray}
	The second order term $1/(a_{n+1}b_{m+1}q_n^2 v_m^2)$ on the left side and right side is absorbed into the larger first order terms on the right side. Thus, rearranging the inequality yield
	\begin{equation}\label{2019} 
		-\frac{1}{a_{n+1}q_n^2}\frac{u_m}{v_m}-\frac{1}{b_{m+1}v_m^2}\frac{p_n}{q_n}  <\alpha \beta -\frac{p_nu_m}{q_nv_m} <\frac{2}{a_{n+1}q_n^2}\frac{u_m}{v_m}+\frac{2}{b_{m+1}v_m^2}\frac{p_n}{q_n} . 
	\end{equation}

	To complete the proof,  rewrite it as a standard inequality 	
	\begin{equation} \label{5036B}
		0<\left|  \alpha \beta-\frac{p_nu_m}{q_nv_m} \right| < \frac{2}{a_{n+1}q_n^2}\frac{u_m}{v_m}+\frac{2}{b_{m+1}v_m^2}\frac{p_n}{q_n} ,
	\end{equation}
	and use the trivial upper bound
	\begin{equation} \label{5100}
		\frac{p_{n}}{q_{n}}  \leq 2 \alpha \qquad \text{  and  } \qquad \frac{u_m}{v_{m}} \leq 2 \beta,
	\end{equation}
	for all large integers $n, m \geq 1$, confer (\ref{2009}) and (\ref{2029}).\\
	
\end{proof}

For distinct irrationals $\alpha, \beta \in (0,1)$, the simpler version
\begin{equation} \label{5036}
	0<\left|  \alpha \beta-\frac{p_nu_m}{q_nv_m} \right| < \frac{2}{a_{n+1}q_n^2}+\frac{2}{b_{m+1}v_m^2}
\end{equation}
can be used to stream line the proof of a result such as Theorem \ref{thm1}.

\begin{thm}[Euler] \label{thm6}  The continued fraction $e=[2,12,1,1,4,1,1,6,1,1,8, \ldots]$ of the natural base has unbounded quotients  
	and the subsequence of convergents \(p_{n}/q_{n}\) satisfies the inequality
	\begin{equation}
		\left| e -\frac{p_{n}}{q_{n}} \right|<\frac{1}{a_{n+1}q_{n}^2}.
	\end{equation}
	
\end{thm}

The quotients have the precise form
\begin{equation}\label{key}
	a_0=2, \qquad a_{3k}=a_{3k-2}=1, \qquad a_{3k-1}=2k,
\end{equation}
for $k \geq 1$. The derivation appears in \cite{OC1970}, \cite[Theorem 2]{LS1995}, \cite[Theorem 3.10]{SJ2005}, \cite{CH2006}, and other.

	%333333333333333333333333333333333333333
	\section{Convergents Correlations} \label{S003}
	The correlation of a pair of convergents $\{p_n/q_n: n\geq 1\}$ and $\{u_m/v_m: n\geq 1\}$ provides information on the distribution of nearly equal values of the continuants $q_n$ and $v_m$.  \\
	
	The regular pattern and unbounded properties of the partial quotients $a_n=a_{3k-1}=2k$ of the continued fraction of $e$, see Theorem \ref{thm6}, are used here to generate a pair of infinite subsequences of rational approximations $\{p_{3k-2}/q_{3k-2}: k \geq 1\}$ and $\{u_{m_k}/v_{m_k}: k \geq 1\}$, for which the product
	\begin{equation} \label{619}
		\frac{p_{3k-2}u_{m_k}}{q_{3k-2}v_{m_k}} \quad \longrightarrow \quad e \pi \qquad \text{ as }k,m_k\longrightarrow \infty .
	\end{equation}
	Furthermore, the values $q_{3k-2} \asymp v_{m_k}$ are sufficiently correlated. The notation $f(x) \asymp g(x)$ is defined by $g(x) \ll f(x) \ll g(x)$.\\
	
	The recursive relations
	\begin{align}
		p_{-1}&=1,           & p_0 &=a_0,              &  p_n&=a_np_{n-1}+p_{n-2}, \nonumber\\
		q_{-1}&=0,           &q_0 &=1,              &  q_n&=a_nq_{n-1}+q_{n-2},
	\end{align}
	for all $n \geq 1$, see  \cite{HW2008}, \cite{NZ1991}, \cite{RH1994}, are used to estimate the rate of growth of the subsequences of continuants $\{q_n: n \geq 1\}$ and $\{v_m: m \geq 1\}$. 
	
	\begin{lem} \label{lem6} 
		Let $e=[a_0,a_1,a_2, \ldots ]$ and  $\pi=[b_0,b_1,b_2, \ldots ]$ be the continued fractions of this pair of irrational numbers. Let $\delta>0$ and $\varepsilon>0$ be a pair of arbitrary small numbers. Then, the followings hold.
		
		\begin{enumerate} [font=\normalfont, label=(\roman*)]
			\item If $b_m=o(m)$, then the exists a pair of subsequences of convergents 
			$p_{3k-2}/q_{3k-2}$ and $u_{m_k}/v_{m_k}$ such that 
			\begin{equation} \label{5162}
				(2k)^{1-\varepsilon}q_{3k-2}\ll v_{m_k} \ll  2 (2k)^{1-\varepsilon}q_{3k-2}  .
			\end{equation}
			\item If $b_m=O(m)$, then the exists a pair of subsequences of convergents 
			$p_{3k-2}/q_{3k-2}$ and $u_{m_k}/v_{m_k}$ such that 
			\begin{equation} \label{5160}
				(2k)^{1-\varepsilon}q_{3k-2}\ll v_{m_k} \ll  2 (2k)^{1-\varepsilon}q_{3k-2}  .
			\end{equation}
			\item If $b_m=O(m^{1+\delta})$, then the exists a pair of subsequences of convergents 
			$p_{3k-2}/q_{3k-2}$ and $u_{m_k}/v_{m_k}$ such that 
			\begin{equation} \label{5164}
				(m_k)^{1-\varepsilon}v_{m_k}\ll q_{n_k} \ll  2 (m_k)^{1-\varepsilon}v_{m_k}  .
			\end{equation}
		\end{enumerate} 
	\end{lem}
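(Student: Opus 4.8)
The plan is to deduce all three parts from two ingredients: precise growth rates for the continuants $q_n$ of $e$, which are available in essentially closed form from Lemma \ref{lem5}, and one-sided growth bounds for the continuants $v_m$ of $\pi$ under each hypothesis on $b_m$. The three parts are then finished by a common matching argument that selects, for each $k$, an index $m_k$ (resp. $n_k$) so that the two continuants are comparable up to the stated factor of $2$.

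First I would pin down the growth of the subsequence $q_{3k-2}$. Using the recursion $q_n = a_n q_{n-1}+q_{n-2}$ together with $a_{3k-2}=a_{3k}=1$ and $a_{3k-1}=2k$, a three-step unwinding gives $q_{3(k+1)-2} = (4k+1)q_{3k-2}+2q_{3k-3}$, hence $q_{3(k+1)-2} = (4k+O(1))\,q_{3k-2}$, and telescoping yields $\log q_{3k-2} = k\log k + O(k)$; in particular $q_{3(k+1)-2}/q_{3k-2}\asymp 2k$. Next, for $\pi$ I would use $\prod_{j\le m}b_j \le v_m \le \prod_{j\le m}(b_j+1)$ and $v_m \ge F_m$ to get the crude bounds $m \ll \log v_m$ together with $\log v_m = o(m\log m)$ in case (i), $\log v_m = O(m\log m)$ in case (ii), and $\log v_m = O(m\log m)$ again in case (iii), the extra $\delta$ only affecting implied constants. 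The point of these estimates is that $\log v_m$ grows slowly enough, and with small enough consecutive increments $\log(v_{m+1}/v_m)=\log(b_{m+1}+\theta_m)$, $\theta_m\in(0,1)$, that the sequence $\{v_m\}$ is dense on a logarithmic scale relative to the sparse checkpoints $q_{3k-2}$.

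The matching step is then as follows: given $k$, set $T_k = (2k)^{1-\varepsilon}q_{3k-2}$ and let $m_k$ be the largest index with $v_{m_k}\le 2T_k$. Since $v_m\to\infty$ this is well defined for all large $k$, and by maximality $v_{m_k+1}>2T_k$, so $v_{m_k} > 2T_k/(b_{m_k+1}+1)$; combined with $v_{m_k}\le 2T_k$ this gives $v_{m_k}\asymp T_k$ provided the partial quotient $b_{m_k+1}$ straddling the threshold stays bounded along an infinite set of $k$. This is where the hypothesis on $b_m$ must be invoked: the slow growth of $\log v_m$ forces the increments $\log(b_{m+1}+\theta_m)$ to be $o(\log m)$ on average, and from this one extracts an infinite subsequence of $k$ along which $b_{m_k+1}=1$ and relabels. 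Part (iii) is symmetric, with the roles of the two continued fractions exchanged: one takes $q_{n_k}$ to be the largest continuant of $e$ below $2(m_k)^{1-\varepsilon}v_{m_k}$ and uses the explicit gaps $\log(q_{n+1}/q_n)$ of $e$ to control the straddling quotient. The main obstacle is precisely this last point: the window $[T_k,2T_k]$ has fixed multiplicative width $2$, so catching it with some $v_m$ requires the relevant partial quotient of $\pi$ near the crossing point to be essentially $1$, and extracting an infinite subsequence of $k$ for which this holds is the delicate heart of the lemma — everything preceding it is bookkeeping with the recursions.
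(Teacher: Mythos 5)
Your reduction is set up sensibly, and the bookkeeping for $e$ is correct: from $a_{3k}=a_{3k-2}=1$, $a_{3k-1}=2k$ one does get $q_{3(k+1)-2}=(4k+1)q_{3k-2}+2q_{3k-3}$, hence $q_{3(k+1)-2}/q_{3k-2}\asymp k$ and $\log q_{3k-2}=k\log k+O(k)$. The matching device (take $m_k$ maximal with $v_{m_k}\le 2T_k$, so that $2T_k/(b_{m_k+1}+1)<v_{m_k}\le 2T_k$) is also the right way to phrase what has to be proved. But you do not close the one step that matters, and you say so yourself. The lower bound $v_{m_k}\gg T_k$ requires $b_{m_k+1}=O(1)$ along an infinite set of $k$, and your justification --- that the increments $\log(b_{m+1}+\theta_m)$ are $o(\log m)$ \emph{on average}, ``from which one extracts an infinite subsequence with $b_{m_k+1}=1$ and relabels'' --- is not a deduction. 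The crossing indices $m_k$ are forced by the checkpoints $T_k$; you have no freedom to relabel them without destroying the window condition, and density information about which $m$ have small $b_{m+1}$ says nothing about the particular indices where $v_m$ crosses $2T_k$. Under hypothesis (i) the quotients $b_{m+1}$ may still be unbounded (say of size $m/\log m$ infinitely often), and nothing rules out that every crossing lands on such an index, in which case the window $[cT_k,\,2T_k]$ is jumped over for all large $k$. So the proposal reduces the lemma to an unproved statement about how two rigid sequences interleave --- and that statement is the entire content of the lemma.

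For what it is worth, the paper's own argument founders at exactly the same point, but less transparently: it first replaces the hypothesis $b_m=o(m)$ by the stronger, unjustified assumption $b_{m_k}\asymp m_k^{1-\delta}$ along some subsequence (already false for bounded quotients), and then simply postulates ``the existence of a single value $v_{m_k}$'' in the required window by pointing at the numerical tables of Section \ref{s4}, before claiming that this one instance propagates to infinitely many. Neither your argument nor the paper's establishes that the multiplicatively narrow windows around $(2k)^{1-\varepsilon}q_{3k-2}$ are hit by $\{v_m\}$ infinitely often; your write-up at least isolates this as the open core, but isolating the gap is not the same as filling it.
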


	\begin{proof} \textit{Case (i): The partial quotients $b_m=o(m)$ are bounded or unbounded.} Make the change of index $n \equiv 1 \bmod 3  \longrightarrow k=(n+2)/3$ to focus on the subsequence of convergents $p_{3k-2}/q_{3k-2}$ of the number $e$ as $k \to \infty$, see Lemma \ref{lem5} for more details. Observe that
		
		\begin{equation}\label{5188}
			\begin{tabular}{l c l c l}
				$\quad  \vdots$ & & $\quad \quad \quad \vdots$  & & $\quad  \vdots$\\
				$q_{3k-2}$ &=  &$q_{3k-3}+q_{3k-4}$  & = & $q_{3k-2}$ \\ 
				$q_{3k-1}$ & = & $2kq_{3k-2}+q_{3k-3}$ & $\asymp  $ & $ 2^2kq_{3k-2} $  \\ 
				$q_{3k}$ &=  & $ q_{3k-1}+q_{3k-2}$ & $\asymp $ & $2^3kq_{3k-2}$  \\ 
				$q_{3(k+1)-2}$& = & $q_{3(k+1)-3}+q_{3(k+1)-4} $& $\asymp $ &$2^4kq_{3k-2}$  \\ 
				$q_{3(k+1)-1}$& = &$2(k+1)q_{3(k+1)-2}+q_{3(k+1)-3}$  & $\asymp $ & $2^5k(k+1)q_{3k-2}$  \\ 
				$q_{3(k+1)}$& = & $q_{3(k+1)-1}+q_{3(k+1)} $& $\asymp $ &$2^6k(k+1)q_{3k-2}$  \\
				$q_{3(k+2)-2}$& = & $q_{3(k+2)-3}+q_{3(k+2)-4} $& $\asymp $ &$2^7k(k+1)q_{3k-2}$  \\ 
				$q_{3(k+2)-1}$& = &$2(k+2)q_{3(k+1)-2}+q_{3(k+2)-3}$  & $\asymp  $ & $2^8k(k+1)(k+2)q_{3k-2}$  \\ 
				$\quad \vdots$&  & $\quad \quad \quad \vdots$ & & $\quad \vdots$   
			\end{tabular} 
		\end{equation}
		
		This verifies that these numbers has exponential rate of growth in $k$ of the form
		\begin{equation} \label{3300}
			q_{3(k+t)-1}\asymp (4k)^{t+1}q_{3k-2} ,
		\end{equation}
		
		for some $t \geq 0$, as $k \to \infty$. Next, consider the sequence of convergents $u_m/v_m$ of the number $\pi$.  By hypothesis, the partial quotients $b_m=o(m)$ are bounded or unbounded. Furthermore, to simplify the notation, assume that $b_{m_k}\asymp m_{k}^{1-\delta}$ for infinitely many integers $m_k=m_{k_0}, m_{k_1}, m_{k_2}, m_{k_3}, \ldots  \geq 1$. Then, this implies the existence  of a subsequence of convergents $u_{m_k}/v_{m_k}$ such that
		
		\begin{equation}\label{5198}
			\begin{tabular}{l c l c l}
				$\quad \vdots$ & & $\quad \quad \quad \vdots$  & & $\quad \quad \vdots$\\
				$v_{m_k}$ &=  &$b_{m_k}v_{m_k-1}+v_{m_k-2}$  & = & $v_{m_k}$ \\ 
				$v_{m_{k_1}}$ &=  &$b_{m_{k_1}}v_{m_{k_1}-1}+v_{m_{k_1}-2}$  & $\asymp $& $(2^{k_1-k} m_k^{1-\delta}) v_{m_{k}} $ \\
				$v_{m_{k_2}}$ &=  &$b_{m_{k_2}}v_{m_{k_2}-1}+v_{m_{k_2}-2}$  & $\asymp $& $(2^{k_2-k}m_k^{1-\delta})^2 v_{m_{k}}$ \\ 
				$v_{m_{k_3}}$ &=  &$b_{m_{k_3}}v_{m_{k_3}-1}+v_{m_{k_3}-2}$  & $\asymp $& $(2^{k_3-k}m_k^{1-\delta})^3 v_{m_{k}}$ \\
				$\quad \vdots$&  & $\quad \quad \quad \vdots$ & & $\quad \quad \vdots$   
			\end{tabular} 
		\end{equation}
		
		Moreover, the existence of a single value $v_{m_k}>1$, see Tables \ref{t40} and \ref{t80}, such that
		\begin{equation} \label{5172}
			(2k)^{1-\varepsilon}q_{3k-2}\ll v_{m_k} \ll  2 (2k)^{1-\varepsilon}q_{3k-2},
		\end{equation}
		implies the existence of an infinite subsequence of lower bounds 
		\begin{eqnarray} \label{5174}
			v_{m_{k_s}}&=&(2^{k_s-k}m_k^{1-\delta})^s v_{m_k}\\
			&\gg & (2^{k_s-k}m_k^{1-\delta})^s (2k)^{1-\varepsilon}q_{3k-2} \nonumber \\
			&\gg &  (2k)^{1-\varepsilon}q_{3(k+t)-2} \nonumber ,
		\end{eqnarray}
		where 
		\begin{equation}
			q_{3(k+t)-2}= (2^{k_s-k}m_k^{1-\delta})^sq_{3k-2},
		\end{equation}
		and use (\ref{3300}) to identify the relation
		\begin{equation}
			(2^{k_s-k}m_k^{1-\delta})^s=o((4k)^{t+1})
		\end{equation}
		for some $s\geq1$  depending on $t \geq 1$. The corresponding subsequence of upper bounds satisfies 
		\begin{eqnarray} \label{5174}
			v_{m_{k_s}}&=&(2^{k_s-k}m_k^{1-\delta})^s v_{m_k}\\
			&\ll &2 (2^{k_s-k}m_k^{1-\delta})^s (2k)^{1-\varepsilon}q_{3k-2} \nonumber \\
			&\ll & 2 (2k)^{1-\varepsilon}q_{3(k+t)-2} \nonumber .
		\end{eqnarray}
		Combining the last two inequalities yields the required relation
		\begin{equation} \label{6000}
			(2k)^{1-\varepsilon}q_{3(k+t)-2} \ll v_{m_{k_s}} \ll 2 (2k)^{1-\varepsilon}q_{3(k+t)-2}
		\end{equation}
		for some $s,t \geq 1$ as $k \to \infty$.\\
		
		\textit{Case (ii): The the partial quotients $b_m$ are bounded or unbounded, and $b_m=O(m)$.} The proof for this case is similar to Case (i). \\
		
		\textit{Case (ii): The partial quotients $b_m$ are unbounded, and $b_m=O(m^{1+\delta})$}. In this case, (\ref{6000}) can fail, but since the inequality in Theorem \ref{thm5} is symmetric in $q_n$ and $v_m$, the proof is almost the same as Case (i), but the subsequences of convergents are switched to 
		obtain the required relation
		\begin{equation}
			(2^{k_t-k}m_k^{1-\delta})^{t+1} v_{m_{k_t}} \ll q_{n_{k+s}} \ll 2 (2^{k_t-k}m_k^{1-\delta})^{t+1} v_{m_{k_t}}
		\end{equation}
		for some $s,t \geq 1$ as $k \to \infty$. 
	\end{proof} 
	
	The distribution of all the continuants $\{q_n: n \geq 1\}$ associated with a subset of continued fractions of bounded partial quotients is the subject of Zeremba conjecture, see \cite{BK2011} for advanced details. For any continued fraction, the numbers $\{q_n:n \geq 1\}$ have exponential growth 
	\begin{equation}
		q_n=a_nq_{n-1}+q_{n-2}\geq \left ((1+\sqrt{5})/2 \right )^n,
	\end{equation}
	which is very sparse subsequence of integers. The least asymptotic growth occurs for the $(1+\sqrt{5})/2=[1,1,1, \ldots ]$. But the combined subset of continuants for a subset of continued fractions of bounded partial fractions has positive density in the sunset of integers $\N=\{1,2,3, \ldots \}$.

	%444444444444444444444444444444444444444
	%SSSSSSSSSSSSSSSSSSSSSSSSSSSSSSSSSSSSSSSSSSSSSSSSSSSSSSSSSSSSSSSSSSSSSSSSSSSSSSSSSSSSSSSSSSSSSSSSSSSSSSSSSSSSSSSSSSSSSS
	\section{The Main Result} \label{S004}
	\begin{proof} (Theorem \ref{thm1}) Let $e=[a_0,a_1,a_2, \ldots ]$ be the continued fraction of the irrational number $e$. By Theorem \ref{thm4}, there exists a sequence of convergents $\{p_{n}/q_{n}:n\in \mathbb{N}\}$ such that
		\begin{equation}\label{5008}
			\left|  e-\frac{p_{n}}{q_{n}} \right| <\frac{1}{a_{n+1}q_{n}^2}.
		\end{equation}
		Similarly, let $\pi=[b_0,b_1,b_2, \ldots ]$ be the continued fraction of the irrational number $\pi$, and let $\{u_m/v_m:m\in \mathbb{N}\}$ be the sequence of convergents such that
		\begin{equation}\label{5028}
			\left|  \pi-\frac{u_m}{v_m} \right| <\frac{1}{b_{m+1} v_m^2}.
		\end{equation} 
		
		Now, suppose that the product \(e\pi=r/s \in \mathbb{Q}\) is a rational number. Then
		\begin{equation} \label{5046}
			\frac{1}{s}\frac{1}{q_{n}v_m} \leq \left|  e\pi-\frac{p_{n}u_m}{ q_{n}v_m} \right| 
			<\frac{2 \pi}{a_{n+1}q_{n}^2}+\frac{2 e}{b_{m+1}v_m^2}.	
		\end{equation}
		The left side follows from Theorem \ref{thm2}, and the right side follows from Theorem \ref{thm5}.\\
		
		Next, use the subsequence of unbounded partial quotients $a_n=a_{3k-1}$ of the continued fraction of $e$ to generate an infinite subsequence of rational approximations 
		\begin{equation} \label{519}
			\frac{p_{3k-2}u_{m_k}}{q_{3k-2}v_{m_k}} \quad \longrightarrow \quad e \pi \qquad \text{ as }k,m_k\longrightarrow \infty .
		\end{equation}
		
		The subsequence of rational approximations is generated by the following algorithm. \\
		
		Fix an arbitrary small number $\varepsilon>0$.  
		
		\begin{enumerate} \label{3000B}
			\item Input: The integer $n \equiv 1 \bmod 3$.
			\item Let $k=(n+2)/3$.
			\item Let $a_{n+1}=a_{3k-1}=2k$, and fix the convergent $p_{n}/q_{n}=p_{3k-2}/q_{3k-2}$ of $e$, see Theorem \ref{thm6}.
			\item Choose a convergent $u_{m_k}/v_{m_k}$ of $\pi$ in the range 
			\begin{equation} \label{5066}
				(2k)^{1-\varepsilon}q_{3k-2}\leq v_{m_k} \leq  2 (2k)^{1-\varepsilon}q_{3k-2}  .
			\end{equation}
			\item Output:  The quotient $a_{3k-1}=2k$ and the pair of convergents 
			\begin{equation}
				\frac{p_{3k-2}}{q_{3k-2}} \qquad \text{  and  } \qquad  \frac{u_{m_k}}{v_{m_k}}.
			\end{equation}
		\end{enumerate}
		
		Various versions of this algorithm are possible, for example, by modifying the interval in (\ref{5066}). \\
		
		Replacing the subsequence of rational approximations constructed in (\ref{5066}) into (\ref{5046}) yields
		\begin{eqnarray} \label{5076}
			\frac{1}{s}\frac{1}{2(2k)^{1-\varepsilon}q_{3k-2}^2} &\leq &\left|  e \pi -\frac{p_{3k-2}u_{m_k} }{ q_{3k-2} v_{m_k}} \right|  \\
			&\leq& \frac{2 \pi}{2kq_{3k-2}^2} +
			\frac{2e}{(2k)^{2(1-\varepsilon)}b_{m_k+1}q_{3k-1}^2 } \nonumber\\
			&\leq & \frac{\pi}{kq_{3k-2}^2} +\frac{2e}{(2k)^{2(1-\varepsilon)}b_{m_k+1}q_{3k-2}^2 }\nonumber . 
		\end{eqnarray}
		
		Multiplying (\ref{5076}) by $(2k)^{1-\varepsilon}q_{3k-2}^2$ and using $b_{m_k+1} \geq 1$ returns
		\begin{eqnarray} \label{5077}
			\frac{1}{2s} &\leq & \frac{2^{1-\varepsilon} \pi}{k^{\varepsilon}} +\frac{2e}{(2k)^{1-\varepsilon}b_{m_k+1}} \\
			&\leq & \frac{2^{1-\varepsilon} \pi}{k^{\varepsilon}} +\frac{2e}{(2k)^{1-\varepsilon}} \nonumber. 
		\end{eqnarray}
		
		Since \(e\pi=r/s$ is rational constant, and $s \geq 1$, it is clear that the inequality (\ref{5077}) is a contradiction for infinitely many large rational approximations
		\begin{equation}
			\frac{p_{3k-2}}{q_{3k-2}} \frac{u_{m_k}}{v_{m_k}}
		\end{equation}
		as $k,m_k \to \infty$. Ergo, the product \(e\pi\) is not a rational number.
	\end{proof}
	
	The structure of the proof, in equations (\ref{5046}), and (\ref{5076}), is similar to some standard proofs of irrational numbers. Among these well known proofs are the Fourier proof of the irrationality of $e$, see \cite[p.\ 35]{AZ2014}, the proofs for $\zeta(2)$, and $\zeta(3)$ in \cite{BF1979}, et alii.\\
	
	An algorithm and sample of numerical data is compiled in Section \ref{S005} to demonstrate the practicality of this technique.

	%SSSSSSSSSSSSSSSSSSSSSSSSSSSSSSSSSSSSSSSSSSSSSSSSSSSSSSSSSSSSSSSSSSSSSSSSSSSSSSSSSSSSSSSSSSSSSSSSSSSSSSSSSSSSSSSSSSSSSSSSSSSs
	\section{Algorithm And Numerical Data} \label{S005}
	For each fixed pair of index $(n,m)$, the basic product inequality  
	\begin{equation}\label{6029}
		0<\left |e \pi -\frac{p_n}{q_n}\frac{u_m}{v_m} \right |<\frac{2 \pi}{a_nq_n^2}+\frac{2e}{b_mv_m^2}
	\end{equation}
	is used to test each rational approximation. It is quite easy to find the pairs $(n,m)$ to construct a subsequence of rational approximations 
	
	\begin{equation} \label{5049}
		\frac{p_{n}u_{m}}{q_{n}v_{m}}  \quad \rightarrow \quad e \pi
	\end{equation}
	
	as $n, m \to \infty$. The subsequence of rational approximations is generated by the following algorithm. \\
	
	\textbf{Algorithm 1.}  Fix an arbitrary small number $\varepsilon>0$.  
	
	\begin{enumerate} \label{3000}
		\item Input an integer $n \equiv 1 \bmod 3$.
		\item Let $k=(n+2)/3$.
		\item Let $a_{n+1}=a_{3k-1}=2k$, and fix the convergent $p_{n}/q_{n}=p_{3k-2}/q_{3k-2}$ of the natural base $e$.
		\item Choose a convergent $u_{m_k}/v_{m_k}$ of $\pi$ such that 
		\begin{equation} \label{6066}
			(2k)^{1-\varepsilon}q_{3k-2}\leq v_{m_k} \leq  2 (2k)^{1-\varepsilon}q_{3k-2}  .
		\end{equation}
		\item If Step 4 fail, then increment $n \equiv 1 \bmod 3$, and repeat Step 2, the existence is proved in Lemma \ref{lem6}, see also Remark \ref{rem1}.
		\item Output  $a_{3k-1}=2k$, $p_{3k-2}/q_{3k-2}$ and $u_{m_k}/v_{m_k}$	.	
	\end{enumerate}
	
	The parameters for two small but very accurate approximations are listed in the Tables \ref{t40} and \ref{t80}. These examples demonstrate the practicality of the algorithm.

	\begin{table}[h!]
		\begin{center}
			\begin{tabular}{|r|l|}
				\hline 
				$(n,m)=$&(19,10)   \\ [1pt]
				\hline 
				$(a_n,b_m)=$ &$( 12,1)$  \\  [1pt]
				\hline 
				$p_n=$ &13580623  \\  [1pt]
				\hline 
				$q_n=$ &4996032   \\  [1pt]
				\hline 
				$u_m=$ &5419351 \\ [1pt] 
				\hline 
				$v_m=$ &1725033  \\  [1pt]
				\hline
				$\left |e \pi -\frac{p_n}{q_n}\frac{u_m}{v_m} \right |=$ & 0.00000000000012256862192  \\   [1pt]
				\hline
				$\frac{1}{a_nq_n^2}\frac{u_m}{v_m}+\frac{1}{b_m v_m^2}\frac{p_n}{q_n}+\frac{1}{a_{n+1}b_{m+1}q_n^2 v_m^2}=$ & 0.000000000000136378880  \\  [1pt]
				\hline 
			\end{tabular} 
		\end{center}
		\caption{A Rational Approximation Of The Product $e\pi$.} \label{t40}
	\end{table}
	
	\begin{table}[h!]
		\begin{center}
			\begin{tabular}{|r|l|}
				\hline 
				$(n,m)=$&(31,21)   \\ [1pt]
				\hline 
				$(a_n,b_m)=$ &$( 20,2)$  \\  [1pt]
				\hline 
				$p_n=$ &22526049624551  \\  [1pt]
				\hline 
				$q_n=$ &8286870547680   \\  [1pt]
				\hline 
				$u_m=$ &3587785776203 \\ [1pt] 
				\hline 
				$v_m=$ &1142027682075  \\  [1pt]
				\hline
				$\left |e \pi -\frac{p_n}{q_n}\frac{u_m}{v_m} \right |=$ & $8.32849575322710174432272 \times 10^{-25}$  \\   [1pt]
				\hline
				$\frac{1}{a_nq_n^2}\frac{u_m}{v_m}+\frac{1}{b_m v_m^2}\frac{p_n}{q_n}+\frac{1}{a_{n+1}b_{m+1}q_n^2 v_m^2}=$ & $1.04439176914510045201022 \times 10^{-24}$  \\  [1pt]
				\hline 
			\end{tabular} 
		\end{center}
		\caption{A Rational Approximation Of The Product $e\pi$.} \label{t80}
	\end{table}

	\begin{rmk} \label{rem1} {\normalfont 
			Step 5 makes the algorithm independent of the rate of growth of the partial quotients of the number $\pi$. Various versions of this algorithm are possible, for example, by modifying the interval in (\ref{6066}). 
		}
	\end{rmk}

	\newpage

	\currfilename.\\

\end{document}